\documentclass{amsart}
\usepackage{color}

\textwidth=480pt
\textheight=660pt
\oddsidemargin=0pt
\evensidemargin=0pt
\topmargin=-10pt

\newcommand{\F}{\mathcal F}
\newcommand{\U}{\mathcal U}
\newcommand{\mK}{\mathbb K}
\newcommand{\IN}{\mathbb N}
\newcommand{\IR}{\mathbb R}
\newcommand{\e}{\varepsilon}
\renewcommand{\phi}{\varphi}
\newcommand{\Lip}{\mathrm{Lip}}
\newcommand{\lev}{\mathrm{lev}}
\newcommand{\dowa}{{\downarrow}}
\newcommand{\upa}{{\uparrow}}
\newcommand{\suc}{\mathrm{succ}}
\newcommand{\w}{\omega}
\newcommand{\diam}{\mathrm{diam}}
\newcommand{\dom}{\mathrm{dom}}
\newcommand{\Ra}{\Rightarrow}
\newcommand{\bw}{\overline{\omega}}
\newcommand{\Fix}{\mathrm{Fix}}

\newtheorem{theorem}{Theorem}
\newtheorem{proposition}{Proposition}
\newtheorem{lemma}{Lemma}

\newtheorem{problem}{Problem}
\theoremstyle{definition}
\newtheorem{definition}{Definition}

\title[Detecting topological and Banach fractals]{Detecting topological and Banach fractals\\ among zero-dimensional spaces}

\author{Taras Banakh, Magdalena Nowak, and Filip Strobin}
\address{T.Banakh: Ivan Franko National University of Lviv (Ukraine) and  Jan Kochanowski University in Kielce (Poland)}
\email{t.o.banakh@gmail.com}

\address{M.Nowak: Institute of Mathematics, Jan Kochanowski University in Kielce, ul. \'Swietokrzyska 15, 25-406 Kielce, Poland}
\email{magdalena.nowak805@gmail.com}
\address{F.Strobin: Institute of Mathematics, \L\'od\'z University of Technology, W\'olcza\'nska 215, 93-005 \L\'od\'z, Poland, and Jan Kochanowski University in Kielce (Poland)}
\email{filip.strobin@p.lodz.pl}

\thanks{The work of the first, second and third authors was partially supported by  National Science Centre grants
DEC-2012/07/D/ST1/02087, DEC-2012/07/N/ST1/03551, and FUGA No. 2013/08/S/ST1/00541, respectively.}

\subjclass[2010]{Primary: 28A80; Secondary: 37C25, 37C70}
\keywords{Topological fractal, Banach fractal, attractor, iterated function system, contracting function system, fixed point, zero-dimensional compact space}
\date{}

\begin{document}
\begin{abstract} A topological space $X$ is called a {\em topological fractal} if $X=\bigcup_{f\in\F}f(X)$ for a finite system $\F$ of continuous self-maps of $X$, which is {\em topologically contracting} in the sense that for every open cover $\U$ of $X$ there is a number $n\in\IN$ such that for any functions $f_1,\dots,f_n\in \F${,} the set $f_1\circ\dots\circ f_n(X)$ is contained in some set $U\in\U$. If, in addition, all functions $f\in\F$ have Lipschitz constant $<1$ with respect to some metric generating the topology of $X$, then the space $X$ is called a {\em Banach fractal}. It is known that each topological fractal is compact and metrizable.  We prove that a zero-dimensional compact metrizable space $X$ is  a topological fractal if and only if $X$ is  a Banach fractal if and only if $X$ is either uncountable or $X$ is countable and its scattered height $\hbar(X)$ is a successor ordinal. For countable compact spaces this classification was recently proved by M.~Nowak.
\end{abstract}
\maketitle

\section{Introduction}
One of the milestones of Theory of Fractals is the result of Hutchinson \cite{Hut} (cf. also Barnsley \cite{B} and Hata \cite{Ha}) saying that for any finite family $\F$ of Banach contractions of a complete metric space $X${,} there is a unique non-empty compact set $A_\F\subset X$ such that
$$
A_\F=\bigcup_{f\in\F}f(A_\F),
$$
and, moreover, for every nonempty and compact set $K$, the sequence of iterations $\F^{n}(K)$ (of the map $\F:\mK(X)\to \mK(X)$, $\F:D\mapsto \bigcup_{f\in\F}f(D)$, defined on the hyperspace $\mK(X)$ of nonempty compact subsets of $X$) is convergent to $A_\F$ with respect to the Hausdorff metric on $\mK(X)$. Finite families of mappings in this context are called \emph{iterated function systems} (briefly, IFSs), and compact sets $A_\F$ generated by IFSs are called \emph{IFS-attractors} or {\em deterministic fractals} in the sense of Hutchinson and Barnsley.

The Hutchinson-Barnsley theory attracted many mathematicians and has been deeply studied since early 80's. In the last years the following problem was discussed:

\begin{problem}
Detect compact metric spaces which are (or are not) attractors of IFSs consisting of Banach contractions (or weaker types of contractions), or which are (or are not) homeomorphic to such attractors.
\end{problem}

Such a problem was considered for example in \cite{BN}, \cite{BI}, \cite{CR}, \cite{D}, \cite{K}, \cite{KN}, \cite{Kw}, \cite{N}, \cite{NS}, \cite{S}.
In particular, M. Nowak \cite{N} proved that a compact countable space $X$ is homeomorphic to an IFS-attractor if and only if the scattered height $\hbar(X)$ of $X$ is a successor ordinal. The scattered height $\hbar(X)$ is defined for any scattered topological space $X$ as follows. Let us recall that a topological space $X$ is {\em scattered} if each subspace $A\subset X$ has an isolated point. The Baire Theorem guarantees that each countable complete metric space is scattered.

Let $X$ be a topological space. For a subset $A\subset X$ denote by $A^{(1)}$ the set of non-isolated points of $A$. Put $X^{(0)}=X$ and for every ordinal $\alpha$ define the $\alpha$-th derived set $X^{(\alpha)}$ by the recursive formula $$X^{(\alpha)}=\bigcap_{\beta<\alpha}\big(X^{(\beta)}\big)^{(1)}.$$
The intersection $X^{(\infty)}=\bigcap_{\alpha}X^{(\alpha)}$ of all derived sets has no isolated points and is called the {\em perfect kernel} of $X$. For each point $x\in X\setminus X^{(\infty)}$ there is a unique ordinal $\hbar(x)$ such that $x\in X^{(\hbar(x))}\setminus X^{(\hbar(x)+1)}$, called the {\em scattered height} of $x$. For a scattered topological space $X$ the perfect kernel $X^{(\infty)}$ is empty and the ordinal $\hbar(X)=\sup\{\hbar(x):x\in X\}$ is called the {\em scattered height} of $X$.
For a non-scattered space $X$ we put $\hbar(X)=\infty$ and assume that $\infty$ is larger than any ordinal number.

In this paper we shall extend Nowak's characterization of countable IFS-attractors to the class of all zero-dimensional compact metrizable spaces and shall detect zero-dimensional Hausdorff spaces which are homeomorphic to topological or Banach fractals.

Following \cite{BN}, we define a Hausdorff topological space $X$ to be a {\em topological fractal} if $X=\bigcup_{f\in\F}f(X)$ for a finite system of continuous self-maps of $X$, which is {\em topologically contracting} in the sense that for every open cover $\U$ of $X${,} there is $n\in\IN$ such that for any maps $f_1,\dots,f_n\in\F${,} the set $f_1\circ\dots\circ f_n(X)$ is contained in some set $U\in\U$. This definition implies that each topological fractal is compact and metrizable (since the family $\{f_1\circ \dots\circ f_n(X):n\in\IN,\;f_1,\dots,f_n\in\F\}$ is a countable network of the topology of $X$).

A compact metrizable space $X$ is called a {\em Banach fractal\/} if $X=\bigcup_{f\in\F}f(X)$ for a finite system $\F$ of self-maps which have Lipschitz constant $<1$ with respect to some metric that generates the topology of $X$. It follows that a space $X$ is a Banach fractal if and only if $X$ is homeomorphic to the attractor of an IFS consisting of Banach contractions on a complete metric space. It is easy to see that each Banach fractal is a topological fractal.  On the other hand, there exist examples of topological fractals which are not Banach fractals (see, e.g., \cite{BN} and \cite{NS}).
However, such examples do not exist among zero-dimensional spaces. We recall that a topological space $X$ is called {\em zero-dimensional} if it has a base of the topology consisting of closed-and-open subsets. In the following theorem we shall show that each zero-dimensional topological fractal is a Banach fractal and moreover, is a Banach ultrafractal.

A compact topological space $X$ is {called} a {\em Banach ultrafractal} if $X=\bigcup_{f\in\F}f(X)$ for a finite family $\F$ of continuous self-maps of $X$ such that the family $(f(X))_{f\in\F}$ is disjoint and for any $\e>0$ there is an ultrametric $d$ generating the topology of $X$ such that $\Lip(\F)=\sup_{f\in\F}\sup_{x\ne y}\frac{d(f(x),f(y))}{d(x,y)}<\e$. We recall that a metric $d$ on $X$ is called an {\em ultrametric} if it satisfies the strong triangle inequality $d(x,z)\le\max\{d(x,y),d(y,z)\}$ for any points $x,y,z\in X$. It is well-known \cite[\S I.7]{Ke} that the topology of a compact metrizable space $X$ is generated by an ultrametric if and only if $X$ is zero-dimensional.

So, for any compact metrizable space we have the implications
$$\centerline{Banach ultrafractal $\Ra$ Banach fractal $\Ra$ topological fractal,}$$ none of which can be reversed in general. However, for zero-dimensional compact metrizable spaces all these notions are equivalent. 

\begin{theorem}\label{main} For a zero-dimensional compact metrizable space $X${,} the following conditions are equivalent:
\begin{enumerate}
\item $X$ is a topological fractal;
\item $X$ is a Banach fractal;
\item $X$ is a Banach ultrafractal;
\item the scattered height $\hbar(X)$ of $X$ is not a countable limit ordinal\newline (so, $\hbar(X)$ is either $\infty$ or a countable successor ordinal).
\end{enumerate}
\end{theorem}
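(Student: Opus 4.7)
The plan is to close the cycle $(3) \Rightarrow (2) \Rightarrow (1) \Rightarrow (4) \Rightarrow (3)$. The first two implications are immediate, since every ultrametric is a metric and every Banach contraction is topologically contracting. The substantive work lies in $(1) \Rightarrow (4)$ and $(4) \Rightarrow (3)$. Throughout we use the basic dichotomy for zero-dimensional compact metrizable spaces: either $X$ is scattered---and hence countable, with $\hbar(X)$ a countable ordinal---or the perfect kernel $X^{(\infty)}$ is a non-empty Cantor set, in which case $\hbar(X) = \infty$.

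For $(1) \Rightarrow (4)$, the non-scattered case is vacuous since $\hbar(X) = \infty$ is not a countable limit. In the scattered (countable) case, suppose for contradiction that $X$ is a topological fractal under $\F$ with $\hbar(X) = \lambda$ a countable limit ordinal. For closed subspaces of a scattered space one has $(A \cup B)^{(\alpha)} = A^{(\alpha)} \cup B^{(\alpha)}$, hence $\hbar(A \cup B) = \max\{\hbar(A), \hbar(B)\}$; consequently the equation $X = \bigcup_{f \in \F} f(X)$ forces some $f_{i_1}(X)$ to have height $\lambda$. Iterating, we build an infinite address $(i_k)$ such that each $X_n := f_{i_1} \circ \cdots \circ f_{i_n}(X)$ has height $\lambda$. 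Topological contraction gives $\diam(X_n) \to 0$ in any compatible metric, so $\bigcap_n X_n = \{x^*\}$ for a single point $x^*$. Since $X^{(\lambda)}$ is finite and each $X_n^{(\lambda)} \subseteq X^{(\lambda)}$ is non-empty, for $n$ sufficiently large $X_n^{(\lambda)} = \{x^*\}$. The delicate final step will be to extract a contradiction from this configuration---arbitrarily small iterated images $X_n$ in which $x^*$ still has scattered height $\lambda$---using the finiteness of $\F$ and an adaptation of Nowak's argument from \cite{N} to the purely topological contraction hypothesis.

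For $(4) \Rightarrow (3)$ we construct a Banach ultrafractal structure on $X$, case-by-case. If $\hbar(X) = \infty$, exploit the Cantor-set perfect kernel and zero-dimensionality to partition $X$ into finitely many clopen pieces $U_1, \ldots, U_N$, each admitting a continuous surjection $f_j \colon X \to U_j$ with $f_1(X), \ldots, f_N(X)$ pairwise disjoint; choosing an ultrametric on $X$ and rescaling the clopen pieces, we may arrange $\Lip(f_j) < \e$ for any prescribed $\e > 0$. If $\hbar(X) = \alpha + 1$ is a countable successor, proceed by transfinite induction on $\alpha$: the base $\alpha = 0$ is a finite discrete space and is trivial; for the inductive step, use the finite set $X^{(\alpha)}$ and small clopen neighborhoods of its points to decompose $X$ as a clopen disjoint union of pieces of strictly smaller scattered height together with clopen copies of $X$ itself near the top points, apply the inductive hypothesis, and glue the resulting ultrametrics and IFSs together. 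The main obstacle is anticipated to be the contradiction step in $(1) \Rightarrow (4)$, where the weaker topological-contraction hypothesis must suffice to rule out $\hbar(X)$ being a countable limit.
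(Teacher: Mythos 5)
Your reduction to the two implications $(1)\Rightarrow(4)$ and $(4)\Rightarrow(3)$, and the preliminary analysis in each, are sound; but both implications are left genuinely incomplete, and in the first case you say so yourself. For $(1)\Rightarrow(4)$ you correctly set up the configuration (a nested sequence $X_n=f_{i_1}\circ\dots\circ f_{i_n}(X)$ with $\diam(X_n)\to0$ and $x^*\in X_n^{(\lambda)}$ for all $n$, using $(A\cup B)^{(\alpha)}=A^{(\alpha)}\cup B^{(\alpha)}$ for closed sets and compactness to see that $X_n^{(\lambda)}\ne\emptyset$ even though $\lambda$ is a limit), but you then announce that ``the delicate final step will be to extract a contradiction.'' No contradiction is extracted, and it is not clear that one follows from this configuration alone: nothing so far prevents arbitrarily small iterated images from carrying a point of full height $\lambda$. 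This is precisely the step the paper does not reprove either; it invokes the metrization theorem of Banakh--Kubi\'s--Novosad--Nowak--Strobin and Miculescu--Mihail (topological fractal $\Leftrightarrow$ attractor of a weakly contracting IFS) together with Nowak's classification of scattered attractors. If you intend to ``adapt Nowak's argument,'' that adaptation is the entire content of the implication and must be supplied.

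The implication $(4)\Rightarrow(3)$ is likewise only a program. The assertion that each clopen piece $U_j$ admits a continuous surjection $f_j\colon X\to U_j$ is salvageable (retract onto a Cantor subset, or use transfinite induction in the scattered case), but the real difficulty is simultaneous: the images $f_j(X)$ must be pairwise disjoint, must cover $X$, and all $f_j$ must have Lipschitz constant $<\e$ with respect to a \emph{single} compatible ultrametric, for every $\e>0$. ``Rescaling the clopen pieces'' and ``gluing the resulting ultrametrics and IFSs'' is exactly where the work lies; the paper builds the machinery of normed height trees, surjective height morphisms (Lemma~\ref{l:sur}, proved by transfinite induction on $\hbar(\min T)$), and an explicitly constructed norm making all morphisms $\lambda$-Lipschitz (Lemma~\ref{l:unital}) to carry this out. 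Note also that your inductive step for successor height uses ``clopen copies of $X$ itself near the top points,'' which presupposes that small clopen neighborhoods of maximal-height points are homeomorphic to $X$; this holds only for unital spaces (via the Mazurkiewicz--Sierpi\'nski classification), and the non-unital case needs the separate decomposition-and-gluing argument given at the end of Section~\ref{s4}, where the extended maps $\bar f_i$ send the complement of $X_i$ to the fixed point of $f_i$.
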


This theorem will be proved in Section~\ref{s4} after some preparatory work made in Sections~\ref{s2} and \ref{s3}.

\section{Normed height trees}\label{s2}

Theorem~\ref{main} will be proved with help of a representation of a given zero-dimensional compact metrizable space as the boundary of a normed height tree. So, in this section we shall recall the necessary information about (height) trees and their boundaries.

By a {\em tree} we shall understand a partially ordered set $T$ such that $T$ has the (unique) smallest element $\min T$ and for every $x\in T$ the {\em{lower set}} ${\dowa}x=\{y\in T:y\le x\}$ is finite and linearly ordered. For any two elements $x,y\in T$ the intersection ${\dowa}x\cap{\dowa}y$, being finite and linearly ordered, has the largest element, which will be denoted by $x\wedge y$.
For any element $x\in T$ of a tree $T$ its {\em{upper set}} ${\upa}x=\{y\in T:x\le y\}$ is a subtree of $T$.
 The set $\suc(x)$ of minimal elements of the set $\upa x\setminus\{x\}$ is called the set of {\em successors} of $x$.
 For a subset $A\subset T$ of a tree $T$ let $$\dowa A=\bigcup_{a\in A}\dowa a\mbox{ \ and \ }\upa A=\bigcup_{a\in A}\upa a$$be the {\em lower} and {\em upper sets of $A$}, respectively.

 By a {\em branch} of $T$ we understand any maximal linearly ordered subset of $T$. The set $\partial T$ of all branches of $T$ is called the {\em boundary} of the tree $T$. For two distinct branches $x,y\in\partial T$ let $x\wedge y=\max(x\cap y)\in T$.  For an element $x\in T$ put ${\Uparrow}x=\{L\in\partial T:x\in L\}$.

Let $\bw_1=\{-1,\infty\}\cup\w_1$ be the linearly ordered set of countable ordinals enlarged by two points $-1$ and $\infty$ such that $-1<\alpha<\infty$ for any $\alpha\in\w_1$.

\begin{picture}(400,125)(-10,0)
\put(200,10){\circle*{3}}
\put(200,10){\line(5,1){150}}
\put(200,10){\line(3,2){45}}
\put(200,10){\line(-3,2){45}}
\put(200,10){\line(-5,1){150}}

\put(350,40){\circle*{3}}
\put(338,30){$\alpha+1$}
\put(350,40){\line(4,1){80}}
\put(350,40){\line(3,1){60}}
\put(350,40){\line(2,1){40}}
\put(350,40){\line(1,1){20}}
\put(350,40){\line(1,2){10}}
\put(350,40){\line(0,1){20}}
\put(350,40){\line(-1,2){10}}
\put(350,40){\line(-1,1){20}}
\put(350,40){\line(-2,1){40}}

\put(390,60){\line(0,1){40}}
\put(387,58){\Large $\ast$}

\put(387,78){\Large $\ast$}
\put(387,98){\Large $\ast$}
\put(369,62){$\alpha$}
\put(358,62){$\alpha$}
\put(348,62){$\alpha$}
\put(337,62){$\alpha$}
\put(326,62){$\alpha$}
\put(305,62){$\alpha$}

\put(50,40){\circle*{3}}
\put(50,40){\line(2,1){40}}
\put(50,40){\line(1,1){20}}
\put(50,40){\line(1,2){10}}
\put(50,40){\line(0,1){20}}
\put(50,40){\line(-1,2){10}}
\put(50,40){\line(-1,1){20}}
\put(50,40){\line(-2,1){40}}

\put(70,60){\line(0,1){40}}
\put(67,57){\Large $\ast$}
\put(67,77){\Large $\ast$}
\put(67,97){\Large $\ast$}

\put(245,40){\circle*{3}}

\put(243,31){$\infty$}
\put(245,40){\line(2,1){40}}
\put(245,40){\line(1,1){20}}
\put(245,40){\line(1,2){10}}
\put(245,40){\line(0,1){20}}
\put(245,40){\line(-1,2){10}}
\put(245,40){\line(-1,1){20}}
\put(245,40){\line(-2,1){40}}

\put(265,60){\line(0,1){40}}
\put(262,57){\Large $\ast$}
\put(262,77){\Large $\ast$}
\put(262,97){\Large $\ast$}

\put(250,62){\small $\infty$}
\put(240,62){\small $\infty$}
\put(229,62){\small $\infty$}
\put(218,62){\small $\infty$}
\put(198,62){\small $\infty$}

\put(245,40){\circle*{3}}

\put(155,40){\circle*{3}}
\put(155,40){\line(0,1){60}}
\put(152,57){\Large $\ast$}
\put(152,77){\Large $\ast$}
\put(152,97){\Large $\ast$}
\put(152,28){$0$}
\put(140,57){\small $-1$}
\put(140,77){\small $-1$}
\put(140,97){\small $-1$}
\end{picture}

\begin{definition}\label{d:ht} By {a} {\em height tree} we understand a pair $(T,\hbar)$ consisting of a tree $T$ and a function $\hbar:T\to \bw_1$ (called the {\em height function} on $T$) such that for every vertex $x\in T${,} the following conditions are satisfied:
\begin{itemize}
\item the set $\suc(x)$ contains exactly one point $*_x$ of height $\hbar(*_x)=-1$;
\item if $\hbar(x)\in\{-1,0\}$, then $\suc(x)=\{*_x\}$ and if $\hbar(x)>0$, then the set $\suc(x)$ is countably infinite;
\item if $\hbar(x)=\infty$, then all but finitely {many} points of the set $\suc(x)$ have height $\infty$;
\item if $0<\hbar(x)<\w_1$, then $\hbar(x)=\sup_{y\in\suc(x)}(\hbar(y)+1)=\lim_{y\in \suc(x)}(\hbar(y)+1)$ (which means that for any ordinal $\alpha<\hbar(x)$ there exists a finite subset $F\subset\suc(x)$ such that $\alpha<\hbar(y)+1\le \hbar(x)$ for all $y\in\suc(x)\setminus F$).
\end{itemize}
\end{definition}

For every vertex $x\in T$ of a height tree $T${,} the unique point $*_x\in\suc(x)$ with $\hbar(*_x)=-1$ will be called the {\em central point} of $x$. For each point $x\in T\setminus\{\min T\}$ we shall also define the {\em neighbor point} $\star_x$ letting $\star_x=*_y$ where $y\in T$ is the unique point such that $x\in\suc(y)$. For $x=\min T$ the neighbor point has not been defined, so we put $\star_x:=*_x$.

\begin{definition}\label{canontop} The boundary $\partial T$ of a height tree $T$ carries the {\em canonical topology} generated by the subbase $$\{{\Uparrow}x:x\in T,\;\hbar(x)\ne-1\}\cup\{\partial T\setminus{\Uparrow} x:x\in T\},$$ where ${\Uparrow}x:=\{\bar x\in \partial T:x\in\bar x\}$ for $x\in T$.
\end{definition}
\begin{definition}\label{d:tn}
Let $T$ be a height tree $T$. A {\em norm} $\|\cdot\|:T\to\IR$ on $T$ is a function having the following properties:
\begin{itemize}
\item for any vertices $x\le y$ of $T$ we get $\|x\|\ge \|y\|\ge 0$;
\item a vertex $x\in T$ has norm $\|x\|=0$ if and only if $\hbar(x)=-1$;
\item $\lim_{x\in T}\|x\|=0$, which means that for any positive real number $\e${,} the set $\{x\in T:\|x\|\ge \e\}$ is finite.
\end{itemize}
A {\em normed height tree} is a height tree{ $T$ endowed}  with a norm $\|\cdot\|$.
\end{definition}

The norm $\|\cdot\|$ of a normed height tree $T$ determines an ultrametric $d$ on $\partial T$ defined by
$$
d(x,y)=\begin{cases}\;
\max\{\|z\|:z\in (x\cup y)\cap\suc(x\wedge y)\}  & \mbox{if $x\neq y$}\\
\;0 & \mbox{if $x=y$}
\end{cases}
$$
for any branches $x,y\in\partial T$. This ultrametric will be called the {\em canonical ultrametric} on $\partial T$. It is easy to check that the canonical ultrametric $d$ generates the canonical topology on $\partial T$.
\smallskip


A compact metrizable space $X$ is called {\em unital} if $X$ is either uncountable or $X$ is countable and the derived set $X^{(\hbar(X))}$ is a singleton. It is easy to see that each compact metrizable space $X$ can be written as a finite topological sum of its unital subspaces. Indeed, if $X$ is uncountable, then $X$ is unital, and if $X$ is countable, then there are only finitely many point of  maximal scattered height. This, together with a known fact that each countable compact space is zero-dimensional gives us an appropriate division.

\begin{proposition}\label{p:Xtree} Each unital zero-dimensional compact metrizable space $X$ is homeomorphic to the boundary $\partial T$ of some normed height tree $T$ such that $\hbar(\min T)=\hbar(X)$.
\end{proposition}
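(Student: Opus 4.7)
My plan is to construct, by recursion on the tree, a normed height tree $T$ together with a labelling that attaches to every vertex $x\in T$ of height $\ne-1$ a clopen unital subspace $X_x\subseteq X$ and a distinguished point $p_x\in X_x$, in such a way that $\hbar(x)=\hbar(X_x)$. The root is labelled $(X,p)$, where $p$ is the unique point of scattered height $\hbar(X)$ if $X$ is countable and any point of the perfect kernel $X^{(\infty)}$ if $X$ is uncountable. I fix a compatible ultrametric on $X$ (available by zero-dimensionality) to control diameters along the construction.

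At a vertex $x$ with $\hbar(x)\ne-1$ I add one central successor $*_x$ of height $-1$ (whose subtree is forced by Definition~\ref{d:ht} to be an infinite chain of $-1$-vertices, corresponding under the eventual homeomorphism to $p_x$) together with further non-central successors indexed by a clopen partition of $X_x\setminus\{p_x\}$ into unital pieces of diameter shrinking to $0$. The form of the partition depends on $\hbar(X_x)$. When $\hbar(X_x)=\gamma+1$ is a countable successor, unitality makes $X_x^{(\gamma)}\setminus\{p_x\}=\{q_1,q_2,\ldots\}$ an infinite sequence converging to $p_x$, and nested clopen neighbourhoods $V_n$ of $p_x$ with $V_n\cap X_x^{(\gamma)}=\{p_x,q_n,q_{n+1},\ldots\}$ produce annuli $W_n=V_n\setminus V_{n+1}$ that are clopen unital of height $\gamma$ with distinguished point $q_n$. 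When $\hbar(X_x)=\alpha$ is a countable limit ordinal, I fix $\beta_n\nearrow\alpha$ and exploit the non-isolatedness of $p_x$ in each $X_x^{(\beta_n)}$ to select clopen neighbourhoods $V_n\downarrow\{p_x\}$ so that the annuli $W_n$ are unital with heights cofinal in $\alpha$, enforcing the axiom $\lim_{y\in\suc(x)}(\hbar(y)+1)=\hbar(x)$. When $\hbar(X_x)=\infty$, an analogous argument using non-isolatedness of $p_x$ in the uncountable perfect kernel yields a partition in which cofinitely many pieces meet $X_x^{(\infty)}$ and hence have height $\infty$. A norm on $T$ can then be set by $\|x\|=0$ if $\hbar(x)=-1$ and $\|x\|=\diam(X_x)+2^{-n}$ otherwise (with $n$ the distance from the root), which tends to $0$ along $T$.

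The homeomorphism $h\colon X\to\partial T$ sends a point $q\in X$ to the branch built by choosing at each non-central vertex $x$ the central successor $*_x$ if $q=p_x$, and otherwise the unique non-central successor $y$ with $q\in X_y$. Bijectivity follows from the disjointness and exhaustiveness of the partitions together with the vanishing of diameters (a never-central branch corresponds to a nested sequence of clopens with diameters tending to $0$, converging in $X$ to a single point); continuity in both directions is immediate because under $h$ the subbasic sets $\Uparrow y$ (for $\hbar(y)\ne-1$) match the clopen sets $X_y$, which by construction form a base of the topology of $X$. The main obstacle is the partition step when $\hbar(X_x)$ is a countable limit ordinal or $\infty$, where unitality of each piece must coexist with cofinality of the piece-heights in $\hbar(X_x)$ and, in the $\infty$ case, with the axiom that cofinitely many successors have height $\infty$. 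Resolving this requires a careful inductive choice of the neighbourhood base of $p_x$, merging residual low-height clopen material into higher-height pieces so as to preserve unitality.
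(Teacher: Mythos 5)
Your overall scheme (nested clopen partitions of $X$ with a distinguished point in each piece, annuli $W_n=V_n\setminus V_{n+1}$ around that point, case analysis on whether the height is a successor, a countable limit, or $\infty$) is the same as the paper's, but there is a genuine gap: nothing in your construction forces the pieces to become \emph{uniformly} small with the depth in the tree. At each vertex you only require that the diameters of the successor pieces tend to $0$ \emph{along the sequence of successors}; the first annulus $W_1=V_1\setminus V_2$ may have diameter comparable to $\diam(X_x)$. This breaks the proof in two places. First, your norm $\|x\|=\diam(X_x)+2^{-n}$ need not satisfy the axiom that $\{x\in T:\|x\|\ge\e\}$ is finite. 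Second, and more seriously, your assertion that ``a never-central branch corresponds to a nested sequence of clopens with diameters tending to $0$'' can fail, so $h$ need not be injective. Concretely, let $X=\{0,1\}^{\w}$, $a=0^{\w}$, $b=1^{\w}$: at every stage choose the distinguished point $p$ different from $a,b$ and choose $V_2$ small enough to miss both $a$ and $b$, so that the first annulus contains $\{a,b\}$; iterating this choice produces a never-central branch whose associated nested sequence of clopen sets all contain $a$ and $b$, whence $h(a)=h(b)$. (For countable $X$ the heights strictly decrease along non-central steps, so every branch is eventually central and the problem does not arise; but the proposition also covers uncountable $X$, where branches can stay non-central forever.)

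The fix is exactly the extra step the paper builds in as its condition that every piece at level $n+1$ has diameter $\le 2^{-n}$: before carving out the annuli around $p_x$, first refine $X_x$ by a finite disjoint clopen cover of mesh $\le 2^{-n}$ (where $n$ is the depth of $x$), keep the finitely many small pieces not containing $p_x$ as separate successors (after splitting each into unital pieces of the appropriate heights), and run your annulus construction only inside the small piece containing $p_x$. With that modification all pieces at depth $n+1$ have diameter $\le 2^{-n}$, the intersection along any never-central branch is a singleton, and a norm such as $\|(n,U)\|=\diam(U\cup\{\star_{n-1}(U)\})$ satisfies the required vanishing condition. The remaining delicate points you flag (cofinality of the annulus heights in the limit case, cofinitely many successors of height $\infty$ in the uncountable case, preserving unitality by merging low-height residue) are real but are handled correctly in outline by your argument.
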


\begin{proof} Fix an ultrametric $d$ of diameter $\le 1$ generating the topology of the compact zero-dimensional space $X$. Let $\U_0=\{X\}$.
We shall inductively construct a sequence $(\U_n)_{n\in\w}$ of disjoint closed covers of $X$ and a sequence of maps $(*_n:\U_n\to X)_{n\in\w}$ such that for every $n\in\w$ and $U\in\U_n${,} the following conditions are satisfied:
\begin{enumerate}
\item $*_n(U)\in U^{(\hbar(U))}$;
\item $\U_{n+1}(U)=\{V\in\U_{n+1}:V\subset U\}$ is a disjoint cover of $U$ by unital compact spaces;
\item $\{*_n(U)\}\in\U_{n+1}(U)$;
\item any set $V\in\U_{n+1}(U)$ distinct from $\{*_n(U)\}$ is closed-and-open in $X$;
\item each neighborhood of $*_n(U)$ contains all but finitely many sets of the cover $\U_{n+1}(U)$;
\item each set $V\in\U_{n+1}(U)$ has $\diam(V)\le 2^{-n}$.
\end{enumerate}

Assume that for some $n\in\w$ a cover $\U_n$ and a map $*_n:\U_n\to X$ have been constructed.
For every set $U\in\U_n$ we shall construct a closed cover $\U_{n+1}(U)$ satisfying the conditions (2)--(6) of the inductive construction. By our assumption, $U$ is a unital space. If $U$ is countable, then put $*_n(U)$ be the unique point of the singleton $U^{(\hbar(U))}$.
If $U$ is uncountable, then let $*_n(U)$ be any point of the perfect kernel $U^{(\infty)}$ of $U$.

Take any disjoint finite cover $\F(U)$ of the compact zero-dimensional space $U\subset X$ by closed-and-open subsets of diameter $\le 2^{-n}$. Find a unique set $V_0\in\F(U)$ containing the point $*_n(U)$ and choose a neighborhood base $\{V_n\}_{n=1}^\infty$ at $*_n(U)$ consisting of closed-and-open sets such that $V_n\subset V_{n-1}$ for every $n\in\IN$. If $\hbar(V{_0})<\w_1$, then we can replace $(V_n)_{n=1}^\infty$ by a suitable subsequence and assume that $\lim_{n\to\infty}(\hbar (V_n\setminus V_{n+1})+1)=\hbar(V_0)=\hbar(U)$ (here we assume that $\infty+1=\infty$).
Consider the cover $$\U_{n+1}'(U)=\big(\F(U)\setminus \{V_0\}\big)\cup\{V_n\setminus V_{n+1}:n\in\w\}\cup \big\{\{*_n(U)\}\big\}$$of $U$. For every $V\in\U_{n+1}'(U)$ find a finite disjoint cover $\U_{n+1}(V)$ of $V$ by unital compact spaces such that
$\hbar(V')=\hbar(V)$ for all $V'\in\U_{n+1}(V)$,
and put  $\U_{n+1}(U)=\bigcup_{V\in\U'_{n+1}(U)}\mathcal \U_{n+1}(V)$. It is clear that the cover $\U_{n+1}(U)$ satisfies the conditions (2)--(6) of the inductive construction.

Let $\U_{n+1}=\bigcup_{U\in\U_n}\U_{n+1}(U)$. For every (unital) space $V\in\U_{n+1}$ choose a point $*_{n+1}(V)\in V^{(\hbar(V))}$. This completes the inductive construction.
\smallskip


Now we construct a normed height tree $T$ whose boundary $\partial T$ is homeomorphic to $X$.
Let $$T=\{(n,U):n\in\w,\;U\in\U_{n}\}.$$ Given two pairs $(n,U),(m,V)\in T$ we write $(n,U)\le (m,V)$ if $n\le m$ and $V\subset U$. Define a height function $\hbar:T\to\bw_1$ by the formula
$$\hbar(n,U)=\begin{cases}
-1,&\mbox{ if $n>0$ and $U=\{\star_{n-1}(U)\}$},\\
\hbar(U),&\mbox{ otherwise}.
\end{cases}
$$
Here $\hbar(U)$ stands for the scattered height of the space $U$ (which is equal to $\infty$ if $U$ is uncountable) and $\star_{n-1}(U)=*_{n-1}(\tilde U)$ where $\tilde U\in\U_{n-1}$ is a unique set in $\U_{n-1}$ containing $U$.

Define a norm $\|\cdot\|$ on $T$ letting $$\|(n,U)\|=\diam (U\cup\{\star_{n-1}(U)\})\mbox{ for $(n,U)\in T$}.$$ Here we assume that $\star_{-1}(U)=*_0(U)$ for any set $U\in\U_0=\{X\}$.

 The norm $\|\cdot\|$ determines a canonical ultrametric generating the canonical topology on the boundary $\partial T$ of $T$. {We claim that the map
$$h:X\to\partial T,\;\;h:x\mapsto\{(n,U):n\in\w,\;x\in U\},$$
is a homeomorphism.

Indeed, observe that each branch $y\in\partial T$ consists of the decreasing nested sequence of compact sets with {vanishing} diameters, which implies that $\bigcap_{(n,U)\in y}U=\{h^{-1}(y)\}$ is a singleton. So, $h$ is a bijection. To see that $h$ is continuous, it suffices to check that for each subbasic open set $B\subset \partial T$ the preimage $h^{-1}(B)$ is open in $X$. By Definition~\ref{canontop} of the canonical topology on $\partial T$, the subbasic set $B$ is of the form $B=\partial T\setminus{\Uparrow}t$ for $t=(n,U)\in T$ or $B={\Uparrow}t$ for $t=(n,U)\in T$ with $\hbar(t)\ne-1$. In the first case the set $h^{-1}(B)=X\setminus U$ is open since the set $U\in\U_n$ is closed; in the second case the set $h^{-1}(B)=U$ is open since the set $U\in\U_n$ has $\hbar(n,U)\ne -1$ and hence is open in $X$ by the condition (4) of the inductive construction.} Being a continuous bijective map between compact Hausdorff spaces, the map $h:X\to\partial T$ is a homeomorphism.
\end{proof}

\section{Morphisms between height trees}\label{s3}

In this section we shall discuss morphisms between height trees.

\begin{definition}\label{d:hm}
For height trees $T,S$ a map $f:T\to S$ is called a {\em height morphism}
if for every $x\in T$ the following conditions are satisfied:
\begin{itemize}
\item $\hbar(f(x))\le\hbar(x)$,
\item $f(\suc(x))\subset\suc(f(x))$ and $f(*_x)=*_{f(x)}$,
\item for each $y\in\suc(f(x))\setminus \{*_{f(x)}\}$ there is at most one element $z\in \suc(x)\setminus \{*_x\}$ such that $y=f(z)$.
\end{itemize}
\end{definition}

Each height morphism $f:T\to S$ of height trees induces a map $\bar f:\partial T\to\partial S$ of their boundaries. The map $\bar f$ assigns to each branch $b\in\partial T$ of $T$ the unique branch of $S$ containing the linearly ordered set $f(b)=\{f(x):x\in b\}$. We are interested in finding conditions under which $\bar f$ is Banach contracting with respect to the canonical ultrametrics on $\partial T$, $\partial S$ generated by suitable norms on the height trees $T$ and $S$.

\begin{definition} Let $T,S$ be normed height trees. A height morphism $f:T\to S$ is called {\em $\lambda$-Lipschitz} for a real constant $\lambda$ if $\|f(x)\|\le\lambda\cdot\|x\|$ for each $x\in T$.
\end{definition}

The definition of the height morphism and the canonical ultrametric on the boundary of a normed height tree implies:

\begin{lemma}\label{l:Lip} Let $T,S$ be normed height trees and $\lambda$ be a positive real constant. For each $\lambda$-Lipschitz height morphism $f:T\to S$, the induced boundary map $\bar  f:\partial T\to\partial S$ is $\lambda$-Lipschitz with respect to the canonical ultrametrics on $\partial T$ and $\partial S$.
\end{lemma}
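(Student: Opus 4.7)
My plan is to verify $d(\bar f(x),\bar f(y))\le\lambda\cdot d(x,y)$ by direct computation from the two canonical ultrametrics. The case $x=y$ is trivial, so I would pick distinct branches $x,y\in\partial T$, set $w=x\wedge y$, and denote by $z_x,z_y$ the unique elements of $\suc(w)$ lying on $x$ and $y$ respectively, so that $d(x,y)=\max\{\|z_x\|,\|z_y\|\}$. The target estimate follows easily once one knows that $\bar f(x)\wedge\bar f(y)=f(w)$ with immediate successors $f(z_x),f(z_y)$ on the respective image branches, because then $d(\bar f(x),\bar f(y))=\max\{\|f(z_x)\|,\|f(z_y)\|\}$, and the $\lambda$-Lipschitz hypothesis bounds each term by $\lambda\|z_x\|$ and $\lambda\|z_y\|$.

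The main obstacle is that $f(z_x)$ and $f(z_y)$ could a priori coincide, which would push the meet of $\bar f(x)$ and $\bar f(y)$ strictly above $f(w)$. The key step is therefore to show that whenever $\bar f(x)\neq\bar f(y)$, one must have $f(z_x)\neq f(z_y)$. I would argue this in two sub-cases. If $f(z_x)=f(z_y)$ differs from $*_{f(w)}$, then the identity $f(*_w)=*_{f(w)}$ forces both $z_x,z_y$ to lie in $\suc(w)\setminus\{*_w\}$, and the third clause of Definition~\ref{d:hm} forces $z_x=z_y$, contradicting $x\ne y$. If instead $f(z_x)=f(z_y)=*_{f(w)}$, then $\hbar(*_{f(w)})=-1$ makes $\suc(*_{f(w)})$ a singleton, and iterating this observation shows that $f$ sends everything on $x$ and $y$ above $w$ into the same central chain of $S$, whence $f(x)=f(y)$ and $\bar f(x)=\bar f(y)$, again a contradiction.

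With $f(z_x)\neq f(z_y)$ in hand, checking $\bar f(x)\wedge\bar f(y)=f(w)$ is a short observation: $f(w)\in f(x)\cap f(y)$ lies in both image branches, and $f(z_x),f(z_y)$ are distinct immediate successors of $f(w)$ along $\bar f(x)$ and $\bar f(y)$ respectively, since a height morphism sends successors to successors. Combining this with $\|f(z_x)\|\le\lambda\|z_x\|$ and $\|f(z_y)\|\le\lambda\|z_y\|$ gives the required inequality. The only non-routine point is the collapse analysis in the previous paragraph; the rest is simply unwinding the definitions of the canonical ultrametric and of height morphisms.
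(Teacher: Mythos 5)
Your proof is correct, and it is the intended argument: the paper states Lemma~\ref{l:Lip} without proof as an immediate consequence of the definitions, and your direct verification fills in exactly the details being elided. In particular you correctly isolate the one non-obvious point --- ruling out $f(z_x)=f(z_y)$ when $\bar f(x)\ne\bar f(y)$ via the third clause of Definition~\ref{d:hm} (for the non-central case) and the collapse onto the central chain (for the case $f(z_x)=f(z_y)=*_{f(w)}$) --- after which the inequality $d(\bar f(x),\bar f(y))=\max\{\|f(z_x)\|,\|f(z_y)\|\}\le\lambda\max\{\|z_x\|,\|z_y\|\}=\lambda\, d(x,y)$ is immediate.
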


The following lemma will help us to construct surjective height morphisms between height trees.

\begin{lemma}\label{l:sur} For any height trees $T,S$ with $\hbar(\min T)\ge \hbar(\min S)$ there exists a surjective height morphism $f:T\to S$.
\end{lemma}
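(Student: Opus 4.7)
The plan is to build $f$ by recursion on the levels of $T$ (the level of $z$ being $|\dowa z|-1$), maintaining at each stage the invariant $\hbar(f(x)) \le \hbar(x)$ and ensuring that $f$ restricts to a surjection $\suc(x) \to \suc(f(x))$ that satisfies the conditions of Definition~\ref{d:hm}. Surjectivity of $f \colon T \to S$ then follows by an immediate induction on the (finite) level of an element of $S$, starting from $f(\min T) := \min S$, which is admissible by the hypothesis $\hbar(\min T) \ge \hbar(\min S)$.

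The inductive step reduces to the following local task: given $x \in T$ and $y := f(x)$ with $\hbar(y) \le \hbar(x)$, construct a surjection $g \colon \suc(x) \to \suc(y)$ with $g(*_x) = *_y$, with $\hbar(g(z)) \le \hbar(z)$ for all $z \in \suc(x)$, and with $g$ injective on the preimage of $\suc(y) \setminus \{*_y\}$. It suffices to produce an injection $h \colon \suc(y) \setminus \{*_y\} \to \suc(x) \setminus \{*_x\}$ satisfying $\hbar(h(v)) \ge \hbar(v)$ for every $v$; one then sets $g$ to be the inverse of $h$ on its image and sends $*_x$ together with every other element of $\suc(x) \setminus \{*_x\}$ to $*_y$. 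All three local conditions are then routine to verify (for the height condition, note that $\hbar(*_y) = -1$ is the minimum value of $\bw_1$, so the inequality holds automatically on elements mapped to $*_y$).

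The construction of $h$ splits on $\beta := \hbar(y)$. If $\beta \le 0$, then $\suc(y) \setminus \{*_y\}$ is empty and there is nothing to do. If $\beta = \infty$, then the invariant forces $\hbar(x) = \infty$, so by Definition~\ref{d:ht} both $\suc(x)$ and $\suc(y)$ contain cofinitely many successors of height $\infty$; inject the countably many $\infty$-height successors of $y$ into those of $x$, and absorb the finitely many remaining successors of $y$ into still-unused $\infty$-height successors of $x$ (using $\infty \ge \hbar(v)$). If $0 < \beta < \w_1$, enumerate $\suc(y) \setminus \{*_y\} = \{v_1, v_2, \dots\}$ and build $h$ greedily: the key input from Definition~\ref{d:ht} is that for each $\delta < \hbar(x)$ the set $\{z \in \suc(x) : \hbar(z) \ge \delta\}$ is cofinite in $\suc(x)$, and applied to $\delta := \hbar(v_i) < \beta \le \hbar(x)$ this guarantees, after the finitely many prior choices $h(v_1), \dots, h(v_{i-1})$, an unused $z \in \suc(x) \setminus \{*_x\}$ with $\hbar(z) \ge \hbar(v_i)$, which we set as $h(v_i)$.

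The main obstacle is ensuring that this greedy selection is never blocked, especially when $\hbar(x)$ is a countable limit ordinal, where the heights of successors of $x$ only approach $\hbar(x)$ cofinally from below rather than being uniformly bounded. The argument handles this uniformly by extracting from the third bullet of Definition~\ref{d:ht} the single statement that $\{z \in \suc(x) : \hbar(z) \ge \delta\}$ is cofinite for every $\delta < \hbar(x)$, which covers the successor, limit, and $\infty$ cases together; the rest — propagation of the invariant and surjectivity of $f$ — is essentially bookkeeping.
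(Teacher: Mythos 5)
Your proof is correct. The combinatorial heart is the same as the paper's: at a vertex $x$ with image $y=f(x)$ of no greater height, match the elements of $\suc(y)\setminus\{*_y\}$ injectively to successors of $x$ of at least the same height, and send everything unmatched to $*_y$; the paper's map $f_0$ is precisely the inverse of your injection $h$, and your cofiniteness observation (that $\{z\in\suc(x):\hbar(z)\ge\delta\}$ is cofinite in $\suc(x)$ for every $\delta<\hbar(x)$, by the third and fourth bullets of Definition~\ref{d:ht}) is the correct justification that the greedy choice is never blocked, including at limit-height vertices. What differs is the global organization. The paper runs a transfinite induction on $\hbar(\min T)$ over the countable ordinals, defining the map on $\suc(\min T)$ and invoking the inductive hypothesis to get surjective morphisms ${\upa}x\to{\upa}f_0(x)$ on the subtrees, and then treats the case $\hbar(\min T)=\infty$ by a separate level-by-level construction. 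You instead run a single ordinary induction on the levels of $T$, carrying the invariant $\hbar(f(x))\le\hbar(x)$; this subsumes the successor, limit, and $\infty$ cases in one uniform scheme and dispenses with transfinite induction entirely (the paper's $\infty$-case argument is essentially your level recursion specialized to that case). Both arguments are sound; yours is the more economical bookkeeping, at the price of having to isolate the cofiniteness statement explicitly rather than letting the induction hypothesis absorb it.
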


\begin{proof} This lemma will be proved by transfinite induction on the ordinal $\hbar(\min T)\in\bw_1$. It is trivial if $\hbar(\min T)\in\{-1,0\}$. Assume that for some ordinal $\alpha<\w_1$ the lemma has been proved for any height trees $T,S$ with  $\hbar(\min S)\le\hbar(\min T)<\alpha$. Take any height trees $T,S$ with  $\hbar(\min S)\le\hbar(\min T)=\alpha$.
If $\hbar(\min S)\in\{-1,0\}$, then the tree $S$ is order isomorphic to $\w$ and by a trivial reason, there exists a unique surjective height morphism $f:T\to S$. So, we assume that $\hbar(\min S)>0$, which implies that the set $\suc(\min S)$ is infinite.

By Definition~\ref{d:ht}, the roots $t=\min T$ and $s=\min S$ of the trees $T$ and $S$ have heights $$\hbar(t)=\sup\{\hbar(y)+1:y\in\suc(t)\}\ge \hbar(s)=
\sup\{\hbar(x)+1:x\in\suc(s)\},$$
which allows us to construct a bijective map $f_0:\dom(f_0)\to\suc(s)\setminus \{*_s\}$ defined
on a countable infinite subset $\dom(f_0)$ of $\suc(t)\setminus \{*_t\}$ such that $\hbar(f_0(x))\le\hbar (x)$ for any $x\in \dom(f_0)$. Extend the map $f_0$ to a map $\bar f_0:\suc(t)\to\suc(s)$ letting $\bar f_0\big(\suc(t)\setminus\dom(f_0)\big)=\{*_s\}$. Observe that for every $x\in\suc(t)$ we get $\hbar (\bar f_0(x))\le\hbar(x)<\alpha$. Then by the inductive assumption there exists a surjective morphism $f_x:{\upa} x\to{\upa}f_0(x)$ of the height subtrees of the trees $S$ and $T$. The morphisms $f_x$, $x\in\suc(t)$, compose a surjective morphism $f:T\to S$ defined by $f(t)=s$ and $f(z)=f_x(z)$ for any $x\in\suc(t)$ and $z\in{\upa}x$.

Finally, we consider the case of height trees $T,S$ with $\hbar(\min S)\le\hbar(\min T)=\infty$. For every $n\in\IN$ let $\lev^{-1}_T(n)=\{t\in T:|{\dowa}t|=n\}$ and $\lev^{-1}_S(n)=\{s\in S:|{\dowa}s|=n\}$ be the $n$-th levels of the trees $T$ and $S$, respectively. It follows that $\lev^{-1}_T(1)=\{\min T\}$ and $\lev^{-1}_S(1)=\{\min S\}$. Let $f_1:\lev^{-1}_T(1)\to \lev^{-1}_S(1)$ be a unique map. By induction for every $n\ge 2$ we can construct a map $f_n:\lev^{-1}_T(n)\to \lev^{-1}_S(n)$ such that for every $t\in \lev^{-1}_T(n-1)$ the following conditions are satisfied:
\begin{itemize}
\item $f_n(\suc(t))=\suc(f_{n-1}(t))$;
\item $f_n(*_t)=*_{f_{n-1}(t)}$;
\item for every point $y\in\suc(f_{n-1}(t))$ distinct from $*_{f_{n-1}(t)}$ there is a unique point $x\in\suc(t)$ with $y=f_n(x)$ and $\hbar(x)=\infty$;
\item each point $x\in \suc(t)$ with $\hbar(x)<\w_1$ has image $f_n(x)=*_{f_{n-1}(t)}$.
\end{itemize}
Then the union $f=\bigcup_{n=1}^\infty f_n:T\to S$ is the required surjective height morphism of $T$ onto $S$.
\end{proof}

For a self-map $f:X\to X$ of a set $X$ define its $n$-th iteration by induction: $f^0$ be the identity map of $X$ and $f^{n+1}=f^n\circ f$ for $n\in\w$. For a point $x\in X$ by $f^{<\w}(x)=\{f^n(x)\}_{n\in\w}$ we denote its orbit under the map $f$.

\begin{lemma}\label{l:unital} Assume that $X$ is either an uncountable zero-dimensional compact metrizable space $X$ or a unital countable space whose scattered height $\hbar(X)$ is a successor ordinal. Then $X$ is a Banach ultrafractal.
\end{lemma}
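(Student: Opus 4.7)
By Proposition~\ref{p:Xtree}, $X \cong \partial T$ for a normed height tree $T$ with $\hbar(\min T) = \hbar(X)$. Writing $t = \min T$ and $\suc(t) \setminus \{*_t\} = \{y_n\}_{n \in \IN}$, Definition~\ref{d:ht} ensures that all but finitely many $y_n$ share the maximal sibling height --- namely $\infty$ in the uncountable case, and $\beta$ in the countable case where $\hbar(X) = \beta+1$. I would first modify the tree so that (i) the finitely many low-height exceptional successors of $t$ are absorbed into the interior of one of the generic subtrees $\upa y_n$, and (ii) the remaining subtrees $\upa y_n$ are pairwise isomorphic as height trees, linked by a coherent sequence of isomorphisms $\phi_n : \upa y_n \to \upa y_{n+1}$. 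This uniformization exploits the flexibility in the construction of Proposition~\ref{p:Xtree}: by making the same choices of clopen covers inside each of the pieces $\Uparrow y_n \subseteq X$, one can ensure that the resulting subtrees coincide up to relabeling.

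Next, I introduce two height morphisms. The \emph{shift} $\sigma : T \to T$ is defined by $\sigma(t) = t$, $\sigma(*_t) = *_t$, and $\sigma|_{\upa y_n} = \phi_n$ for each $n \in \IN$; the induced boundary map $\bar\sigma$ is a bijection of $X$ onto $X \setminus \Uparrow y_0$. The \emph{embedding} $\tau : T \to \upa y_0$ is a surjective height morphism provided by Lemma~\ref{l:sur}, applicable since $\hbar(t) \geq \hbar(y_0)$; the induced boundary map $\bar\tau$ is a surjection $X \to \Uparrow y_0$. The images $\bar\sigma(X)$ and $\bar\tau(X)$ are disjoint clopen subsets with union $X$, so the family $\F := \{\bar\sigma, \bar\tau\}$ automatically meets the disjointness and covering requirements of a Banach ultrafractal.

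To verify $\Lip(\F) < \e$, I would build the norm on $T$ inductively using a scaling parameter $r \in (0, \e)$. Start with a reference norm $\|\cdot\|_*$ on $\upa y_0$ chosen so that $\|\tau(v)\|_* \leq r\|v\|$ for every $v \in T \setminus \{t\}$ (this is achievable by arranging that $\|(y_0)_n\|_*$ decreases like $r^{n+1}\|y_0\|_*$ and iterating the construction inside $\upa y_0$). Then set the norm on $\upa y_n$ to be $r^n$ times the pullback of $\|\cdot\|_*$ along $\phi_0^{-1}\circ\cdots\circ\phi_{n-1}^{-1}$. Each $\phi_n$ is then exactly $r$-Lipschitz, so $\|\sigma(v)\| = r\|v\|$ for all $v \in T \setminus \{t\}$. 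Although $\sigma$ fixes $t$ (and hence is not $r$-Lipschitz as a morphism in the strict sense of Lemma~\ref{l:Lip}), the canonical ultrametric on $\partial T$ uses only the norms of successors of the meet $x \wedge y$ of two distinct branches, never the norm of the meet itself; consequently $\bar\sigma$ and $\bar\tau$ are both $r$-Lipschitz on $\partial T$, giving $\Lip(\F) \leq r < \e$. The hardest step is the uniformization of the tree --- especially in the uncountable case, when $X$ carries non-Cantor structure such as a convergent sequence of isolated points --- where arranging pairwise homeomorphic $\Uparrow y_n$ with matching tree structures requires carefully distributing the exceptional scattered points of $X$ evenly among the clopen annuli converging to the apex.
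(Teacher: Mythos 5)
Your overall architecture (a shift-type map sending the $n$-th clopen piece to the $(n+1)$-st, one extra map covering the first piece, and a geometrically decaying norm making everything $\lambda$-Lipschitz) is exactly the skeleton of the paper's proof. But there is a genuine gap in the step you yourself flag as hardest: the ``uniformization'' making the subtrees ${\upa}y_n$ pairwise \emph{isomorphic} is impossible in general. Take $X$ to be a Cantor set together with a single isolated point. If $X=\{p\}\sqcup\bigsqcup_{n}{\Uparrow}y_n$ with the ${\Uparrow}y_n$ pairwise homeomorphic clopen sets converging to $p$, then the piece containing the isolated point forces \emph{every} piece to contain an isolated point, so $X$ would have infinitely many isolated points --- a contradiction. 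Your suggested repair, absorbing the exceptional successors ``into the interior of one of the generic subtrees,'' only makes this worse: it changes the homeomorphism type of that one subtree and destroys the very isomorphisms $\phi_n$ your shift $\sigma$ is built from. Also, the claim that ``making the same choices of clopen covers inside each piece'' yields identical subtrees cannot work, since the pieces are different spaces to begin with.

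The fix --- and the route the paper takes --- is to drop isomorphisms entirely and use \emph{surjective height morphisms}. Since all but finitely many successors of $\min T$ have the same height $\hbar(\min T)-1$, Lemma~\ref{l:sur} provides a surjective height morphism ${\upa}x_n\to{\upa}x_{n+1}$ for each $n$ (no isomorphism needed), and these assemble into the shift $f$; the finitely many exceptional successors $x\in F$, having strictly smaller height, are each handled by a separate surjective morphism $g_x:T\to{\upa}x$ from the \emph{whole} tree, again by Lemma~\ref{l:sur}, rather than being absorbed. Surjectivity of the induced boundary maps onto their disjoint images is all the attractor equation $X=\bigcup_{\phi\in\F}\bar\phi(X)$ requires; injectivity buys nothing. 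With this change your norm construction essentially becomes the paper's ($\|x\|=\lambda^n$ for $x$ first reached at stage $n$ of iterating $\F$ from the root), and your observation that the canonical ultrametric ignores the norm of the meet itself --- so that fixing the root does not spoil the Lipschitz estimate on the boundary --- is correct and is implicitly used in the paper as well.
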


\begin{proof} There is nothing to prove if $|X|\le 1$. So, we assume that $|X|>1$ and hence $X$ is infinite (since $X$ is unital). By Proposition~\ref{p:Xtree}, $X$ is homeomorphic to the boundary $\partial T$ of a normed height tree $T$ with $\hbar(\min T)=\hbar(X)$. For any ordinal $\alpha$ put $\alpha-1=\sup\{\beta:\beta+1\le\alpha\}$. Observe that $\alpha-1=\alpha$ if $\alpha$ is limit. Also we put $\infty-1=\infty=\infty+1$.

 In the set $\suc(\min T)$ of successors of $\min T$, consider the countable subset
$$L=\{x\in\suc(\min T):\hbar(x)=\hbar(\min T)-1\}.$$ Definition~\ref{d:ht} guarantees that the set $F=\suc(\min T)\setminus \big(L\cup\{*_{\min T}\}\big)$ is finite and each point $x\in F$ has non-negative height $\hbar(x)<\hbar(\min T)-1$.

Write $L=\{x_n\}_{n\in\w}$ where $x_n\ne x_m$ for any distinct numbers $n,m\in\w$, and $x_\w=*_{\min T}$. Define a function $f_1:\suc(\min T)\to\suc(\min T)$  by the formula
$$f_1(x)=\begin{cases}
x_{n+1}&\mbox{ if $x=x_n$ for some $n\in\w$}\\
*_{\min T}&\mbox{otherwise}.
\end{cases}
$$
By Lemma~\ref{l:sur}, for every $x\in\suc(\min T)$ there is a surjective height morphism $f_x:{\upa x}\to \upa f_1(x)$. The height morphisms $f_x$, $x\in\suc(\min T)$, form a height morphism $f:T\to T$ such that $f(\min T)=\min T$ and $f|{\upa x}=f_x$ for every $x\in \suc(\min T)$.
Observe that $f(T)=\{\min T\}\cup\upa\{x_n\}_{1\le n\le\w}$ and the boundary map $\bar f:\partial T\to\partial T$ has image $\bar f(\partial T)=\{b\in\partial T:b\cap\{x_n\}_{1\le n\le\w}\ne\emptyset\}$.
\smallskip

Note that for every $x\in F\cup\{x_0\}$ we have $\hbar(\min T)\geq\hbar(x)$ so, by Lemma~\ref{l:sur}, there exists a surjective height morphism $g_x\colon T\to\upa x$. Then for every branch $b\in\partial T$ the set $\bar g_x(b):=\{\min T\}\cup g_x(b)$ is a branch of $\partial T$.

{Consider the finite family} $\F=\{f\}\cup\{g_x\colon x\in F\cup\{x_0\}\}$ of height morphisms on the tree $T$.
The definitions of the maps from $\F$ guarantee that $T$ is the disjoint union of $\{\phi(T)\}_{\phi\in\F}$, which implies that $\partial T$ is the disjoint union of $\{\bar\phi(\partial T)\}_{{\phi\in\F}}$.
To show that $X$ is a Banach ultrafractal, it remains for every $1>\lambda>0$ to find a norm $\|\cdot\|$ on $T$ which respect to which the morphisms from $\F$ are $\lambda$-Lipschitz.

Let $T_{-1}=\emptyset$, $T_0=\{\min T\}$ and $T_{n+1}=\F(T_n)=\bigcup_{\phi\in\F}\phi(T_n)$ for $n\in\w$. Note that
$$\bigcup_{n\in\w}T_n\supset\{x\in T\colon \hbar(x)\geq 0\}$$
which can be shown by the induction on the levels of the tree T.


The definitions of the morphisms from $\F$ guarantee that for every $n\in\w$ and a point $y\in T_{n+1}\setminus T_n$ of height $\hbar(y)\ne -1$ there is a unique point $x\in T_n$ such that $y\in\F({\{x\}})$. Then the function $\|\cdot\|:T\to[0,1]$ defined by $\|x\|=0$ if $\hbar(x)=-1$ and $\|x\|=\lambda^n$ if $x\in T_n\setminus T_{n-1}$ is a well-defined norm on $T$ turning the maps $\phi\in\F$ into $\lambda$-Lipschitz morphisms of $T$. By Lemma~\ref{l:Lip}, the induced boundary morphisms $\bar\phi:\partial T\to\partial T$ are $\lambda$-Lipschitz with respect to the canonical ultrametric on $T$ generated by the norm $\|\cdot\|$. Therefore $\partial T$ and $X$ are Banach ultrafractals.
\end{proof}

\section{Proof of Theorem~\ref{main}}\label{s4}

Given a zero-dimensional compact metrizable space $X$ we need to check the equivalence of the following conditions:
\begin{enumerate}
\item $X$ is a topological fractal;
\item $X$ is a Banach fractal;
\item $X$ is a Banach ultrafractal;
\item the scattered height $\hbar(X)$ of $X$ is not a countable limit ordinal.
\end{enumerate}

In fact, the implications $(3)\Ra(2)\Ra(1)$ are trivial. The (non-trivial) implication $(1)\Ra(4)$ follows from \cite{BKNNS}, \cite{Mihail} and \cite{N}. The first two papers show that a Hausdorff space is a topological fractal iff it is homeomorphic to an attractor of IFS consisting of weak (Edelstein) contraction. {This fact combined with a result from \cite{N} yields the implication  $(1)\Rightarrow (4)$}. The implication  $(4)\Ra(3)$ is proved in Lemma~\ref{l:unital} for unital spaces. Let us show that this implication holds also if $X$ is not unital. In this case $X$ is a compact countable space such that the set $X^{(\hbar(X))}$ contains more than one point. As was remarked earlier, $X$ can be written as a disjoint union $X=X_1\cup\dots\cup X_n$ of unital spaces with $\hbar(X_i)=\hbar(X)$ for all $i\le n$.  {Fix any positive $\lambda<1$.} By (the proof of) Lemma~\ref{l:unital}, each space $X_i$ can be written as a finite disjoint union $X_i=\bigcup_{f\in\F_i}f(X_i)$ for a family $\F_i$ of continuous selfmaps of $X_i$ with Lipschitz constant $\leq\lambda$, for a suitable ultrametric $d_i<1$ on $X_i$.
We define an ultrametric on $X$ as follows: for every $x,y\in X$,
$$d(x,y)=\begin{cases}
d_i(x,y),&\mbox{ $x,y\in X_i$},\\
1/\lambda,&\mbox{ otherwise}.
\end{cases}$$

The Banach Contracting Principle guarantees that for every $f_i\in\F_i$ the set $\Fix(f_i)=\{x\in X_i:f_i(x)=x\}$ is a singleton.
Extend the maps $f_i\in\F_i$ to continuous maps $\bar f_i:X\to X_i$ such that $\bar f_i(X\setminus X_i)=\Fix(f_i)$. Then the function system $\F=\bigcup_{1\le i\le n}\{\bar f_i\colon f_i\in\F_i\}$ witnesses that $X=\bigcup_{f\in \F}f(X)$ is a Banach ultrafractal.



\begin{thebibliography}{M}

\bibitem{BKNNS} T.~Banakh, W.~Kubis, N.~Novosad, M.~Nowak, F.~Strobin, \emph{Contractive function systems, their attractors and metrization},  Topol. Methods Nonlinear Anal.  (to appear); arxiv: 21405.6289 2014.


\bibitem{BN} T.~Banakh, M.~Nowak, \emph{A $1$-dimensional Peano continuum which is not an IFS attractor}, Proc. Amer. Math. Soc. {\bf 141}:3 (2013), 931--935.

\bibitem{B} M.~Barnsley, \emph{Fractals everywhere}, Academic Press Professional, Boston, MA, 1993.

\bibitem{BI} M. Barnsley, K. Igudesman, \emph{Topological contracting systems}, Lobachevskii Journal of Mathematics, {\bf  32}:3 (2011), 220--223.

\bibitem{CR} S. Crovisier, M. Rams, \emph{IFS attractors and Cantor sets}, Topology Appl. {\bf153} (2006), no. 11, 1849--1859.


\bibitem{D} D.~Dymitru, \emph{Attractors of topological iterated function system}, Annals of Spiru Haret University: Mathematics-Informatics series,  {\bf 8}:2 (2012), 11--16.

\bibitem{Eng} R.~Engelking, {\em General Topology}, Heldermann Verlag, Berlin, 1989.

\bibitem{Ha} M.~Hata, \emph{On the structure of self-similar sets,} Japan J. Appl. Math. {\bf 2} (1985), 381--414.

\bibitem{Hut} J.~Hutchinson, \emph{Fractals and self-similarity}, Indiana Univ. Math. J. {\bf 30}:5 (1981), 713--747.

\bibitem{K} A.~Kameyama, \emph{Distances on topological self-similar sets and the kneading determinants}, J. Math. Kyoto Univ. {\bf 40}:4 (2000), 601--672.

\bibitem{Ke} A.~Kechris, {\em Classical Descriptive Set Theory}, Graduate Texts in Mathematics, {\bf 156}, Springer-Verlag, New York, 1995.

\bibitem{KN} M. Kulczycki, M. Nowak, \emph{A class of continua that are not attractors of any IFS}, Cent. Eur. J. Math. {\bf10}:6 (2012), 2073--2076.

\bibitem{Kw} M. Kwieci\'nski, \emph{A locally connected continuum which is not an IFS attractor}, Bull. Polish Acad. Sci. Math. {\bf47}:2 (1999), 127--132.

\bibitem{Mihail} R. Miculescu, A. Mihail, \emph{On a question of A. Kameyama concerning self-similar metrics}. J. Math. Anal. Appl. {\bf 422}:1 (2015), 265--271.

\bibitem{N} M. Nowak, \emph{Topological classification of scattered IFS-attractors}, Topology Appl. {\bf160}:14 (2013), 1889--1901.

\bibitem{NS} M.~Nowak, T.~Szarek \emph{The shark teeth is a topological IFS-attractor},  Siberian Mathematical Journal {\bf 55}:2 (2014), 296--300.

\bibitem{S} M. Sanders, \emph{An n-cell in $\mathbb{R}^{n+1}$ that is not the attractor of any IFS on $\mathbb{R}^{n+1}$}, Missouri J. Math. Sci. {\bf 21}:1 (2009), 13--20.


\end{thebibliography}
\end{document}